\newtheorem{thm}{Theorem}[section]
\theoremstyle{definition}
\newtheorem{defn}[thm]{Definition}
\theoremstyle{remark}
\newtheorem{rem}[thm]{Remark}
\numberwithin{equation}{section}
\begin{document}
\title[Multidimensional Widder--Arendt theorem...]{Multidimensional Widder--Arendt theorem in locally convex spaces}

\author{Marko Kosti\' c}
\address{Faculty of Technical Sciences,
University of Novi Sad,
Trg D. Obradovi\' ca 6, 21125 Novi Sad, Serbia}
\email{marco.s@verat.net}

{\renewcommand{\thefootnote}{} \footnote{2020 {\it Mathematics
Subject Classification.} 44A10, 44A30, 30E20, 47D99.
\\ \text{  }  \ \    {\it Key words and phrases.} Multidimensional vector-valued Laplace transform, multidimensional Widder--Arendt theorem, sequentially complete locally convex spaces.}}

\begin{abstract}
In this research article, we formulate and prove multidimensional Widder--Arendt theorem and integrated form of multidimensional Widder--Arendt theorem for functions with values in sequentially complete locally convex spaces. Established results seem to be new even for scalar-valued functions. 
\end{abstract} 

\maketitle 

\section{Introduction and preliminaries}

The Widder representation theorem for Laplace transform was proved in 1934 (\cite{widder-t,widder}). Following the ideas of I. Miyadera (\cite{mija}), S. Zaidman proved in 1960 that the Widder representation theorem holds for functions with values in a reflexive Banach space $X$ (\cite{zaidman}). In 1987,
W. Arendt proved that the Widder representation theorem holds for functions with values in a Banach space $X$ if and only if $X$ has the Radon-Nikodym property, which was essentially important for a further expansion of the theory of abstract ill-posed Cauchy problems (\cite{arendt}).
In 1989, M. Hieber proved
an integrated version of the Widder theorem in arbitrary Banach space (\cite{hiber}). After that, the Widder--Arendt theorem has been seriously  reconsidered by A. Bobrowski \cite{bob}-\cite{bob1}, W. Chojnacki \cite{choj}-\cite{choj1}, J. 
Kisy\'nski \cite{kisa}, Y.-C. Li, S.-Y. Shaw \cite{li-shaw} and T.-J. Xiao, J. Liang \cite{x26}; cf. also \cite[Theorem 2.2.1, Theorem 2.4.1, Theorem 2.4.2]{a43}. The main aim of this paper is to reconsider the Widder--Arendt theorem
for multidimensional vector-valued Laplace transform (\cite{mvlt}). 

The structure and main ideas of paper can be briefly summarized as follows. We first explain the basic notation used throughout the paper and provide some useful observations about the Widder--Arendt theorem in the one-dimensional setting; cf. Theorem \ref{waxl} and Theorem \ref{waxl1} in Subsection \ref{oned}. We recall the basic definitions and results about multidimensional vector-valued Laplace transform in Subsection \ref{sdf}. The multidimensional Widder-Arendt theorem in locally convex spaces with Lipschitz-Radon-Nikodym property, which seems to be introduced here for the first time, is formulated and proved in Section \ref{lrnp}; cf. Theorem \ref{wa2}.
Motivated by results established in \cite{hiber}, \cite{li-shaw} and \cite{x26}, in Section \ref{kon} we prove 
integrated form of multidimensional Widder--Arendt theorem in arbitrary sequentially complete locally convex space; cf. Theorem \ref{maha}. We can freely say that the results established in this paper, like the famous Hille-Yosida theorem, are of pure theoretical interest in functional analysis.
\vspace{0.1cm}

\noindent {\bf Notation and preliminaries.} By  $X$ we denote a Hausdorff sequentially complete locally convex space\index{sequentially complete locally convex space!Hausdorff} over the field of complex numbers (SCLCS).
The abbreviation $\circledast$ stands for the fundamental system of seminorms\index{system of seminorms} which defines the topology of $X$; by $X^{\ast}$ we denote the dual space of $X$ equipped with the strong topology. For further information concerning the integration of functions with values in SCLCSs, we refer the reader to \cite{FKP}. If $p\in \circledast$, then we define $U_{p}:=\{x\in X : p(x)\leq 1\} $ and $U_{p}^{\circ}:=\{x^{\ast} \in X^{\ast} :  |\langle x^{\ast},x\rangle| \leq 1\mbox { for all }x\in U_{p}\}.$

The Gamma function\index{function!Gamma} is denoted by $\Gamma(\cdot)$ and the principal branch is always used to take the powers. Define $g_{\zeta}(t):=t^{\zeta-1}/\Gamma(\zeta)$ and
$0^{\zeta}:=0$ ($\zeta>0,$ $t>0$); $g_{0}(t):=\delta(t)$ is the Dirac distribution. Generally, the finite convolution product $\ast_{0}$ is defined by
$
(a\ast_{0}b)(t):=\int^{t}_{0}a(t-s)b(s)\, ds,$ $t\geq 0;$ $\delta \ast_{0} a\equiv a.$ If $0<T<+\infty,$ then we say that a function $f : [0,T] \rightarrow X$ is Lipschitz continuous if for each seminorm $p\in \circledast$ there exists a real constant $M_{p}>0$ such that $p(f(t)-f(t'))\leq M_{p}|t-t'|$ for all $t,\ t'\in [0,T];$ a function $f : [0,+\infty) \rightarrow X$ is said to be Lipschitz continuous if for each $T\in (0,+\infty)$ the function $f_{| [0,T]} : [0,T] \rightarrow X$ is Lipschitz continuous. 

The Radon-Nikodym property in locally convex spaces has been investigated by many authors; see, e.g., \cite{blondia,chi,gili,kurana,sambucini} and the doctoral dissertation of Y. J. Yaseen \cite{yasen}. 
We say that $X$ has the Lipschitz-Radon-Nikodym property if any Lipschitz continuous function $f : [0,+\infty) \rightarrow X$ is differentiable almost
everywhere and its first derivative is locally integrable (in the sense of \cite[Definition 1.1.2(i)]{FKP}). 

In the setting of complex Banach spaces, the Lipschitz-Radon-Nikodym property is equivalent with the Radon-Nikodym property and the Gel'fand property; it is completely without scope of this paper to further relate these concepts in the setting of locally convex spaces (cf. \cite[Section 1.2]{a43}, \cite{ul}, \cite[p. 371]{grose} and \cite[Definition 3.1, p. 36]{yasen} for more details). We will only emphasize here that a Fr\' echet space $X$ has the Lipschitz-Radon-Nikodym property if any Lipschitz continuous function $f : [0,+\infty) \rightarrow X$ is differentiable almost
everywhere because its first derivative is automatically locally integrable (the assumption $p(f(t)-f(t'))\leq M_{p}|t-t'|$ for all $t,\ t'\in [0,T]$ implies $p(f^{\prime}(t))\leq M_{p}$ for a.e. $t\in [0,T]$ and this is sufficient to ensure the local integrability of $f^{\prime}(\cdot)$); to the best knowledge of the author, this cannot be so easily done in general SCLCSs. Since any Lipschitz continuous function $f : [0,+\infty) \rightarrow X$ is locally of bounded variation, we consequently obtain that any Fr\' echet space with the Gel'fand property has the  Lipschitz-Radon-Nikodym property (\cite{grose}).

\subsection{Widder-Arendt theorem in locally convex spaces}\label{oned}

In this subsection, we recall and provide some new results about the Widder-Arendt theorem in locally convex spaces. First of all,
we will slightly reformulate the integrated version of Widder-Arendt theorem in locally convex spaces (cf. \cite[Theorem 2.3]{x26};  part (iii)' is new and not considered elsewhere):

\begin{thm}\label{waxl}
Suppose that $\omega\in {\mathbb R}$, $X$ is an \emph{SCLCS}, $F : (\omega,+\infty) \rightarrow X$ and $(M_{p})_{p\in \circledast}$ is a family of positive real numbers.  
Then the following statements are equivalent:
\begin{itemize}
\item[(i)] $F(\cdot)$ is infinitely differentiable and for each seminorm $p\in \circledast$ we have
\begin{align}\label{mp-xl}
p\Bigl( F^{(v)}\bigl( \lambda\bigr) \Bigr) \leq M_{p}\frac{v!   }{\bigl(\lambda-\omega\bigr)^{v+1}},\quad \lambda>\omega,\ v \in {\mathbb N}_{0}.
\end{align}
\item[(i)'] There exists $a \geq \max(\omega,0)$ such that $F_{| (a,+\infty)}(\cdot)$ is infinitely differentiable and for each seminorm $p\in \circledast$ the estimate \eqref{mp-xl} holds for any $\lambda>a $ and $v \in {\mathbb N}_{0}.$
\item[(ii)] $F(\cdot )$ admits an analytical extension to the right half-plane $\{z  \in {\mathbb C} : \Re z>\omega\} ,$ denoted by the same symbol, and for each seminorm $p\in \circledast$ we have
\begin{align*}
p\Bigl( F^{(v)}\bigl( \lambda\bigr) \Bigr) \leq M_{p}\frac{v!   }{\bigl(\Re \lambda-\omega\bigr)^{v+1}},\quad \Re \lambda>\omega,\ v \in {\mathbb N}_{0}.
\end{align*}
 \item[(iii)]
For every $r \in (0,1]$, there exists a continuous function $f_{r} : [0,+\infty) \rightarrow X$ such that $f_{r}(0)=0,$ 
\begin{align*} 
F\bigl(  \lambda \bigr)=\lambda^{r}\int^{+\infty}_{0}e^{-\lambda t }f_{r }(t)\, dt,\quad \lambda>\max(\omega,0),
\end{align*}
for each seminorm $p\in \circledast$ we have
\begin{align*} 
p\Bigl( f_{r}\bigl( t+h\bigr) -f_{r }\bigl( t\bigr)\Bigr) \leq \frac{2M_{p}}{r\Gamma (r)} \max\Bigl(e^{\omega (t+h)},1\Bigr)h^{r},\quad t\geq 0,\ h\geq 0,
\end{align*}
and the function $H_{r}(\cdot):=(g_{1-r} \ast_{0} f_{r})(\cdot)$ satisfies that for each seminorm $p\in \circledast$ we have
\begin{align*} 
p\Bigl(H_{r}\bigl( t+h\bigr) - H_{r}\bigl( t\bigr)\Bigr) \leq M_{p}e^{\omega t}\max\Bigl(e^{\omega h},1\Bigr)h,\quad t\geq 0,\ h\geq 0.
\end{align*}
\item[(iii)'] 
For every $r \in (0,1]$, there exists a continuous function $h_{r} : [0,+\infty) \rightarrow X$ such that $h_{r}(0)=0,$ 
\begin{align*}
F\bigl(  \lambda \bigr)=\bigl(\lambda-\omega \bigr)^{r}\int^{+\infty}_{0}e^{-(\lambda -\omega)t }h_{r }(t)\, dt,\quad \lambda>\omega ,
\end{align*}
for each seminorm $p\in \circledast$ we have
\begin{align*} 
p\Bigl( h_{r}\bigl( t+h\bigr) -h_{r }\bigl( t\bigr)\Bigr) \leq \frac{2M_{p}}{r\Gamma (r)}  h^{r},\quad t\geq 0,\ h\geq 0,
\end{align*} 
and the function $G_{r}(\cdot):=(g_{1-r} \ast_{0} h_{r})(\cdot)$ satisfies that for each seminorm $p\in \circledast$ we have
\begin{align}\label{hr2g}
p\Bigl(G_{r}\bigl( t+h\bigr) - G_{r}\bigl( t\bigr)\Bigr) \leq M_{p}  h,\quad t\geq 0,\ h\geq 0.
\end{align}
\end{itemize}
\end{thm}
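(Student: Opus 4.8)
Since the equivalences (i)$\,\Leftrightarrow\,$(ii)$\,\Leftrightarrow\,$(iii) are already contained in \cite[Theorem 2.3]{x26}, the plan is to keep those as a black box and graft on the two new statements by two independent reductions: (i)' by a Taylor-series propagation of the derivative bounds from $(a,+\infty)$ to all of $(\omega,+\infty)$, and (iii)' by the translation $\mu=\lambda-\omega$ that normalizes $\omega$ to $0$. I would arrange the new work as (i)$\,\Rightarrow\,$(i)' (immediate, taking $a=\max(\omega,0)$), then (i)'$\,\Rightarrow\,$(i), and finally (iii)$\,\Leftrightarrow\,$(iii)'.

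For (i)'$\,\Rightarrow\,$(i), fix $p\in\circledast$ and a base point $\lambda_{0}>a$. Testing \eqref{mp-xl} against functionals $x^{\ast}\in U_{p}^{\circ}$ shows that the scalar coefficients $\langle x^{\ast},F^{(v)}(\lambda_{0})\rangle/v!$ are bounded in modulus by $M_{p}/(\lambda_{0}-\omega)^{v+1}$, so each scalar function $\langle x^{\ast},F(\cdot)\rangle$ is real-analytic on $(a,+\infty)$ and its Taylor series about $\lambda_{0}$ converges and represents it on the disk $|z-\lambda_{0}|<\lambda_{0}-\omega$; by sequential completeness this lifts to a vector-valued expansion of $F$ there. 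As $\lambda_{0}$ ranges over $(a,+\infty)$ these disks exhaust $\{z:\RE z>\omega\}$ and patch together by the identity theorem, which already yields the analytic extension required in (ii). To get the bound at an arbitrary $\lambda\in(\omega,a]$, I would differentiate the expansion about a suitable $\lambda_{0}>a$ termwise, estimate seminorm by seminorm, and collapse the resulting series with the negative-binomial identity
\[
\sum_{j\ge 0}\binom{j+v}{v}x^{j}=\frac{1}{(1-x)^{v+1}},\qquad |x|<1,
\]
taken at $x=(\lambda_{0}-\lambda)/(\lambda_{0}-\omega)$, for which $1-x=(\lambda-\omega)/(\lambda_{0}-\omega)$. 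The factors of $(\lambda_{0}-\omega)$ cancel and leave exactly $M_{p}\,v!/(\lambda-\omega)^{v+1}$, i.e.\ \eqref{mp-xl} on all of $(\omega,+\infty)$.

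For (iii)$\,\Leftrightarrow\,$(iii)', set $\tilde F(\mu):=F(\mu+\omega)$ for $\mu>0$. Since $\tilde F^{(v)}(\mu)=F^{(v)}(\mu+\omega)$ and $\lambda-\omega=\mu$, statement (i) for $F$ with weight $\omega$ is literally statement (i) for $\tilde F$ with weight $0$; applying the equivalence (i)$\,\Leftrightarrow\,$(iii) to $\tilde F$ (now with $\omega=0$) produces a continuous $f_{r}$ with $f_{r}(0)=0$ and $\tilde F(\mu)=\mu^{r}\int_{0}^{+\infty}e^{-\mu t}f_{r}(t)\,dt$ for $\mu>0$. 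Because $\omega=0$ turns every exponential factor $e^{0\cdot(t+h)}$, $e^{0\cdot t}$, $e^{0\cdot h}$ into $1$, the two H\"older estimates of (iii) collapse to $p(f_{r}(t+h)-f_{r}(t))\le \tfrac{2M_{p}}{r\Gamma(r)}h^{r}$ and $p(H_{r}(t+h)-H_{r}(t))\le M_{p}h$. Writing $h_{r}:=f_{r}$ and $G_{r}:=g_{1-r}\ast_{0}h_{r}$ and undoing the substitution reproduces verbatim the representation $F(\lambda)=(\lambda-\omega)^{r}\int_{0}^{+\infty}e^{-(\lambda-\omega)t}h_{r}(t)\,dt$ and the estimate \eqref{hr2g} of (iii)'; since I used the full equivalence for $\tilde F$, both directions follow at once.

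The only step with real content is (i)'$\,\Rightarrow\,$(i), and I expect its difficulty to be bookkeeping rather than ideas: one must justify, in the vector-valued locally convex setting, that the Taylor series about $\lambda_{0}$ truly represents $F$ and not merely converges, which is why I route the argument through the scalar functionals $x^{\ast}\in U_{p}^{\circ}$ and the identity theorem before invoking sequential completeness. The translation argument for (iii)' is essentially automatic; the only thing to verify is the matching of constants, and it is precisely the normalization $\omega=0$ that erases the exponential factors and delivers the cleaner estimates advertised in (iii)'.
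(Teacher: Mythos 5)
Your proposal is correct, and it splits naturally into a part that coincides with the paper and a part that is genuinely different. The (iii)$\,\Leftrightarrow\,$(iii)' half is exactly the paper's own argument: the paper also sets $G(\lambda):=F(\lambda+\omega)$, observes that (i) for $F$ with weight $\omega$ is (i) for $G$ with weight $0$, and applies the already-established equivalence (i)$\,\Leftrightarrow\,$(iii) to $G$, the exponential factors collapsing exactly as you describe. The genuine divergence is your treatment of (i)'. The paper never proves (i)'$\,\Rightarrow\,$(i) explicitly: it establishes (i)$\,\Rightarrow\,$(iii)$\,\Rightarrow\,$(ii)$\,\Rightarrow\,$(i)' by citing the proofs of \cite[Theorem 2.3]{x26} with $a=\max(\omega,0)$ and then asserts that (i)', (ii), (iii) are equivalent, so the return journey from the hypothesis on $(a,+\infty)$ down to $(\omega,+\infty)$ is delegated wholesale to the cited machinery (as literally written, the paper's chain of implications never leaves (i)'). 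Your Taylor-propagation argument with the negative-binomial collapse at $x=(\lambda_{0}-\lambda)/(\lambda_{0}-\omega)$ is a correct, self-contained substitute for that missing arrow --- the constants do cancel to give exactly $M_{p}v!/(\lambda-\omega)^{v+1}$ --- and it is in effect the ``usual series expansion'' argument that the paper itself invokes for the implication (i)'$\,\Rightarrow\,$(ii) of the multidimensional Theorem \ref{maha}. What your route buys is that the only step not already trivial or black-boxed is made explicit and elementary; what the paper's route buys is brevity and uniformity, since the machinery of \cite{x26} is needed anyway for the hard implications (i)$\,\Rightarrow\,$(iii) and (iii)$\,\Rightarrow\,$(ii), which you also take on faith. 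Two caveats, which apply to you and to the paper in equal measure: first, in (i)' the given $F$ is unconstrained on $(\omega,a]$, so what is actually proved is that $F_{|(a,+\infty)}$ admits an extension satisfying (i)--(iii); both proofs silently identify this extension with $F$. Second, your phrase ``the Taylor series \ldots\ represents it on the disk'' requires the classical Lagrange-remainder estimate (giving representation near $\lambda_{0}$) followed by the identity theorem to propagate agreement along $(a,+\infty)$ before lifting to $X$ by sequential completeness; you do invoke the identity theorem, so this is a matter of wording rather than a gap.
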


\begin{proof} 
The implication (iii) $\Rightarrow$ (ii) follows from the proof of implication (ii) $\Rightarrow$ (i) in the above-mentioned theorem, with $a=\max(\omega,0)$, the implication (ii) $\Rightarrow$ (i)' is trivial and the implication (i) $\Rightarrow$ (iii) follows from the proof of implication (i)' $\Rightarrow$ (ii) in the above-mentioned theorem, with $a=\max(\omega,0)$. Therefore, the statements (i)', (ii) and (iii) are equivalent. The equivalence of these conditions with (i) follows from the trivial implications (ii) $\Rightarrow$ (i) $\Rightarrow$ (i)'. If (iii) holds, then we define the function $G:(0,+\infty) \rightarrow X$ by $G(\lambda):=F(\lambda+\omega),$ $\lambda>0.$ Then, due to (i), it follows that for each seminorm $p\in \circledast$ we have $
p( G^{(v)}( \lambda) ) \leq M_{p}v!  /\lambda^{v+1},$
$\lambda>0 $, $v \in {\mathbb N}_{0}.$ Applying (i) $\Rightarrow$ (iii) to function $G(\cdot)$, with $\omega=0$, we get (iii)'. Finally, if (iii)' holds, then (iii) holds for function $G(\cdot)$ defined above, with $\omega=0.$ Hence, (i) holds for function $G(\cdot)$, with $\omega=0$, so that for each seminorm $p\in \circledast$ we have $
p( G^{(v)}( \lambda) ) \leq M_{p}v!  /\lambda^{v+1},$
$\lambda>0 $, $v \in {\mathbb N}_{0}.$ This simply implies (i) and completes the proof.
\end{proof}

The following result is not well formulated in the setting of locally convex spaces by now (cf. \cite[Theorem 2.4.2]{a43} for the Banach space setting):

\begin{thm}\label{waxl1}
Suppose that $\omega\in {\mathbb R}$, $X$ is an \emph{SCLCS}, $X$ has the Lipschitz-Radon-Nikodym property, $F : (\omega,+\infty) \rightarrow X$ and $(M_{p})_{p\in \circledast}$ is a family of positive real numbers.  
Then the following statements are equivalent:
\begin{itemize}
\item[(i)] $F(\cdot)$ is infinitely differentiable and \eqref{mp-xl} holds for all $p\in \circledast$, $\lambda>\omega $ and $v \in {\mathbb N}_{0}.$ 
\item[(ii)] There exists $f \in L_{loc}^{1}([0,+\infty): X)$ such that 
\begin{align*}
F\bigl(  \lambda \bigr)=\int^{+\infty}_{0}e^{-\lambda t }f(t)\, dt,\quad \lambda>\omega,
\end{align*}
and
for each seminorm $p\in \circledast$ we have $p( f( t)) \leq M_{p}e^{\omega t},$ $t\geq 0$.   
\end{itemize}
\end{thm}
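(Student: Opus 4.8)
The plan is to treat the two implications separately, with (ii) $\Rightarrow$ (i) being a routine differentiation-under-the-integral computation and (i) $\Rightarrow$ (ii) being the substantive direction, where the Lipschitz-Radon-Nikodym property enters.

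For (ii) $\Rightarrow$ (i) I would invoke the standard smoothness of the vector-valued Laplace transform (cf. \cite{mvlt}): the map $\lambda \mapsto \int^{+\infty}_{0}e^{-\lambda t}f(t)\,dt$ is infinitely differentiable on $(\omega,+\infty)$ with $F^{(v)}(\lambda)=\int^{+\infty}_{0}(-t)^{v}e^{-\lambda t}f(t)\,dt$, the differentiation being justified by the majorant $p(f(t))\le M_{p}e^{\omega t}$. Applying $p\in\circledast$, pulling it inside the integral, and using the elementary identity $\int^{+\infty}_{0}t^{v}e^{-(\lambda-\omega)t}\,dt=v!/(\lambda-\omega)^{v+1}$, I obtain
\[
p\bigl(F^{(v)}(\lambda)\bigr)\le M_{p}\int^{+\infty}_{0}t^{v}e^{-(\lambda-\omega)t}\,dt=M_{p}\frac{v!}{(\lambda-\omega)^{v+1}},
\]
which is exactly \eqref{mp-xl}.

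For the main implication (i) $\Rightarrow$ (ii) I would first feed (i) into Theorem \ref{waxl}. By the equivalence established there, condition (iii)$'$ holds; I specialize it to $r=1$. Since $g_{1-r}=g_{0}=\delta$ at $r=1$, the function $G_{1}=g_{0}\ast_{0}h_{1}$ coincides with $h_{1}$, so writing $u:=h_{1}=G_{1}$ the estimate \eqref{hr2g} reads $p\bigl(u(t+\eta)-u(t)\bigr)\le M_{p}\eta$ for all $t,\eta\ge 0$; that is, $u$ is Lipschitz continuous with $u(0)=0$, and one has the representation $F(\lambda)=(\lambda-\omega)\int^{+\infty}_{0}e^{-(\lambda-\omega)t}u(t)\,dt$ for $\lambda>\omega$. (It is essential to use the $G_{r}$-estimate here rather than the $h_{r}$-estimate, as only the former delivers the sharp Lipschitz constant $M_{p}$.) Next I would invoke the Lipschitz-Radon-Nikodym property of $X$: the Lipschitz function $u$ is differentiable a.e.\ with $u'\in L^{1}_{loc}([0,+\infty):X)$, and, since each $p\in\circledast$ is continuous, dividing the Lipschitz estimate by $\eta$ and letting $\eta\to 0^{+}$ gives $p(u'(t))\le M_{p}$ for a.e.\ $t\ge 0$. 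Using the absolutely continuous representation $u(t)=\int_{0}^{t}u'(s)\,ds$ and Fubini's theorem (legitimate because the scalar majorant satisfies $\int^{+\infty}_{0}\int_{0}^{t}e^{-\mu t}p(u'(s))\,ds\,dt\le M_{p}/\mu^{2}<\infty$ for $\mu:=\lambda-\omega>0$), I would compute
\[
(\lambda-\omega)\int^{+\infty}_{0}e^{-(\lambda-\omega)t}u(t)\,dt=\int^{+\infty}_{0}e^{-(\lambda-\omega)t}u'(t)\,dt .
\]
Setting $f(t):=e^{\omega t}u'(t)$ then yields $F(\lambda)=\int^{+\infty}_{0}e^{-\lambda t}f(t)\,dt$ together with $p(f(t))=e^{\omega t}p(u'(t))\le M_{p}e^{\omega t}$ and $f\in L^{1}_{loc}([0,+\infty):X)$, which is precisely (ii).

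The step I expect to be the main obstacle is the passage from ``differentiable a.e.\ with locally integrable derivative'' to the absolutely continuous representation $u(t)=\int_{0}^{t}u'(s)\,ds$ in the non-Banach locally convex setting, on which the Fubini computation rests. This is exactly where the Lipschitz-Radon-Nikodym property and the integration theory of \cite{FKP} must be applied with care, since, as already noted for general \emph{SCLCS}s, neither the local integrability of the derivative nor the fundamental theorem of calculus is automatic.
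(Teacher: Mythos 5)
Your proposal is correct and takes essentially the same route as the paper: (ii) $\Rightarrow$ (i) by differentiating under the integral sign, and (i) $\Rightarrow$ (ii) by specializing Theorem \ref{waxl}(iii)' to $r=1$ (so that $G_{1}=h_{1}$ is Lipschitz with constant $M_{p}$ and vanishes at $0$), invoking the Lipschitz-Radon-Nikodym property to obtain $h_{1}'$ a.e.\ with $p(h_{1}'(t))\leq M_{p}$, and then rewriting $F(\lambda)=(\lambda-\omega)\int_{0}^{+\infty}e^{-(\lambda-\omega)t}h_{1}(t)\,dt$ as $\int_{0}^{+\infty}e^{-(\lambda-\omega)t}h_{1}'(t)\,dt$, i.e.\ taking $f(t)=e^{\omega t}h_{1}'(t)$. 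The only difference is cosmetic: where you justify this last rewriting via the representation $h_{1}(t)=\int_{0}^{t}h_{1}'(s)\,ds$ plus Fubini, the paper cites the integration-by-parts formula of \cite[Theorem 1.1.4(iii)]{FKP}, which rests on the same facts from the integration theory of \cite{FKP}.
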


\begin{proof}
Let us assume first (ii). Then $F(\cdot)$ is analytic in the right half-plane $\{ \lambda \in {\mathbb C} : \Re \lambda> \omega\}$ and
\begin{align*}
\frac{d^{v}}{d\lambda^{v}}
F\bigl(  \lambda \bigr)=\int^{+\infty}_{0}e^{-\lambda t }(-t)^{v}f(t)\, dt,\quad \lambda>\omega;
\end{align*} 
cf. the equation \cite[(34), p. 58]{FKP}. Then
a simple calculation yields that (i) holds. To prove the converse statement, let us consider the function $h_{1}(\cdot)$ given in part (iii)' of Theorem \ref{waxl}. Then we know that \eqref{hr2g} holds with $r=1$ as well as that for each seminorm $p\in \circledast$ the estimate \eqref{hr2g} holds with the function $G_{r}(\cdot)$ replaced by the function $f_{1}(\cdot)$ therein. Therefore, for each seminorm $p\in \circledast$ we have $p(h_{1}( t+h) - h_{1}( t)) \leq M_{p} h,$
$t\geq 0,$ $h\geq 0$ and the function $h_{1}(\cdot)$ is Lipschitz continuous. Since $X$ has the Lipschitz-Radon-Nikodym property, its first derivative $h_{1}^{\prime}(t)$ exists for a.e. $t\geq 0$ and it is locally integrable on the non-negative real axis. It is clear that for each seminorm $p\in \circledast$ we have $p(h_{1}^{\prime}( t))\leq M_{p}$ for a.e. $t\geq 0.$ Applying the formula for partial integration established in \cite[Theorem 1.1.4(iii)]{FKP} and keeping in mind the equality $h_{1}(0)=0,$ it readily follows that 
$$
F(\lambda)=\int^{+\infty}_{0}e^{-(\lambda-\omega)t}h_{1}^{\prime}( t)\, dt,\quad \lambda >\omega.
$$
This yields 

$$
F^{(v)}(\lambda)=\int^{+\infty}_{0}e^{-(\lambda-\omega)t}(-t)^{v}h_{1}^{\prime}( t)\, dt,\quad \lambda >\omega,\ v\in {\mathbb N}_{0},
$$
which immediately implies (i).
\end{proof}

\subsection{Multidimensional vector-valued Laplace transform}\label{sdf}

The double Laplace transform of scalar-valued functions was initially analyzed  by D. L. Bernstein \cite{bern0} (1939), J. C. Jaeger \cite{jager} (1939--1941) and L. Amerio \cite{amerio} (1940). In our recent paper \cite{mvlt}, we have analyzed the multidimensional Laplace transform of functions with values in SCLCSs.
We recall the following notion:

\begin{defn}\label{svi}
Suppose that $X$ is an SCLCS, $f: [0,+\infty)^{n}\rightarrow X$ is a locally integrable function and $(\lambda_{1},...,\lambda_{n})\in {\mathbb C}^{n}$. If 
\begin{align*} 
F\bigl(\lambda_{1},...,\lambda_{n}\bigr)&:=\lim_{t_{1}\rightarrow +\infty;...;t_{n}\rightarrow +\infty}\int^{t_{1}}_{0}...\int^{t_{n}}_{0}e^{-\lambda_{1}s_{1}-...-\lambda_{n}s_{n}}f\bigl(s_{1},...,s_{n}\bigr)\, ds_{1}\, ...\, ds_{n}
\\& :=\int^{+\infty}_{0}...\int^{+\infty}_{0}e^{-\lambda_{1}t_{1}-...-\lambda_{n}t_{n}}f\bigl(t_{1},...,t_{n}\bigr)\, dt_{1}\, ...\, dt_{n},
\end{align*}
exists in topology of $X,$ 
then we say that the Laplace integral $({\mathcal Lf})(\lambda_{1},...,\lambda_{n}):=\hat{f}(\lambda_{1},...,\lambda_{n}):=F(\lambda_{1},...,\lambda_{n})$ exists.
We define the region of convergence of Laplace integral $
\Omega(f)$ by
$
\Omega(f):=\{ (\lambda_{1},...,\lambda_{n})\in {\mathbb C}^{n} : F(\lambda_{1},...,\lambda_{n})\mbox{ exists}\}.
$
Furthermore, if
for each seminorm $p\in \circledast$ we have
\begin{align*}
\int^{+\infty}_{0}...\int^{+\infty}_{0}p\Bigl(e^{-\lambda_{1}t_{1}-...-\lambda_{n}t_{n}}f\bigl(t_{1},...,t_{n}\bigr)\Bigr)\, dt_{1}\, ...\, dt_{n}<+\infty,
\end{align*}
then we say that the Laplace integral $F(\lambda_{1},...,\lambda_{n})$
converges absolutely. We define the region of absolute convergence of Laplace integral $
\Omega_{abs}(f)$ by
$
\Omega_{abs}(f):=\{ (\lambda_{1},...,\lambda_{n})\in {\mathbb C}^{n} : F(\lambda_{1},...,\lambda_{n})\mbox{ converges absolutely}\}.
$
Finally, we define $\Omega_{b}(f)$ as the set of all tuples $(\lambda_{1},...\lambda_{n})\in {\mathbb C}^{n}$ such that the set{\small
\begin{align*} 
\Biggl\{
\int^{t_{1}}_{0}\int^{t_{2}}_{0}...\int^{t_{n}}_{0}e^{-\lambda_{1}s_{1}-\lambda_{2}s_{2}-...-\lambda_{n}s_{n}}f\bigl( s_{1},s_{2},...,s_{n}\bigr)\, ds_{1} \, ds_{2}...\, ds_{n} :  t_{1}\geq 0,...,t_{n}\geq 0\Biggr\}
\end{align*}}
is bounded in $X.$
\end{defn}

If $f: [0,+\infty)^{n}\rightarrow X$ is locally integrable, $\emptyset \neq \Omega \subseteq \Omega(f) \cap \Omega_{b}(f)$ is open and $(\lambda_{1},...,\lambda_{n})\in \Omega,$ then the mapping $F: \Omega \rightarrow X$ is holomorphic and we have
\begin{align}\notag &
F^{(v_{1},...,v_{n})}(\lambda_{1},...,\lambda_{n})=(-1)^{v_{1}+...+v_{n}} \\\label{izvodi} & 
\times
 \Biggl[ \int^{+\infty}_{0}...\int^{+\infty}_{0}e^{-\lambda_{1}t_{1}-...-\lambda_{n}t_{n}}t_{1}^{v_{1} }\cdot ...\cdot  t_{n}^{v_{n}} f\bigl(t_{1},...,t_{n}\bigr)\, dt_{1}\, ...\, dt_{n} \Biggr],
\end{align}
for all $\lambda =(\lambda_{1},...,\lambda_{n})\in \Omega$ and $(v_{1},...,v_{n})\in {\mathbb N}_{0}^{n};$
for the basic information concerning holomorphic vector-valued functions of several variables, we refer the reader to the research article \cite{kruse} by K. Kruse and list of references quoted in \cite{mvlt}.  

\section{Multidimensional Widder-Arendt theorem in locally convex spaces with Lipschitz-Radon-Nikodym property}\label{lrnp}

The main result of this section reads as follows:

\begin{thm}\label{wa2}
Let $\omega_{1}\in {\mathbb R},\ ...,\ \omega_{n}\in {\mathbb R}$, $X$ is an \emph{SCLCS}, $X$ has the Lipschitz-Radon-Nikodym property, $(M_{p})_{p\in \circledast}$ is a family of positive real numbers and $F : (\omega_{1},+\infty) \times (\omega_{2},+\infty)\times \cdots \times (\omega_{n},+\infty) \rightarrow X.$
Let us consider the following statements:
\begin{itemize}
\item[(i)] $F(\cdot , ..., \cdot)$ is infinitely differentiable and for each seminorm $p\in \circledast$ we have
\begin{align}\label{mprp}
p\Bigl( F^{(v_{1},...,v_{n})}\bigl( \lambda_{1},...,\lambda_{n}\bigr) \Bigr) \leq M_{p}\frac{v_{1}! \cdots v_{n}!}{\bigl(\lambda_{1}-\omega_{1}\bigr)^{v_{1}+1}\cdots \bigl(\lambda_{n}-\omega_{n}\bigr)^{v_{n}+1}},\quad 
\end{align}
for any $\lambda_{1}>\omega_{1}, ...,$ $\lambda_{n}>\omega_{n} $ and $(v_{1},...,v_{n}) \in {\mathbb N}_{0}^{n}.$
\item[(ii)] There exists $f \in L_{loc}^{1}([0,+\infty)^{n}: X)$ such that 
\begin{align*}
F\bigl(  \lambda \bigr)=\int^{+\infty}_{0}...\int^{+\infty}_{0}e^{-\lambda_{1}t_{1}-...-\lambda_{n}t_{n}}f\bigl(t_{1},...,t_{n}\bigr)\, dt_{1}\, ...\, dt_{n},\quad \lambda_{i}>\omega_{i}\ \ (1\leq i\leq n),
\end{align*}
and
for each seminorm $p\in \circledast$ we have $p( f( t_{1},...,t_{n})) \leq M_{p}e^{\omega_{1} t_{1}+...+\omega_{n}t_{n}},$ $t=(t_{1},...,t_{n})\in [0,+\infty)^{n}$.   
\end{itemize}
Then we have \emph{(ii)} $\Rightarrow$ \emph{(i)}. Moreover, if $X$ is a Fr\' echet space, then \emph{(i)} $\Leftrightarrow$ \emph{(ii)}.
\end{thm}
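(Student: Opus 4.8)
The plan is to obtain the implication (ii) $\Rightarrow$ (i) directly from the multivariate differentiation formula \eqref{izvodi}, and to prove the converse in a Fr\'echet space by induction on the dimension $n$, using the one-dimensional Theorem \ref{waxl1} both as the base case and as the single-variable inversion step.

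For (ii) $\Rightarrow$ (i) I would first note that the pointwise estimate $p(f(t_{1},\dots,t_{n}))\leq M_{p}e^{\omega_{1}t_{1}+\dots+\omega_{n}t_{n}}$ forces the Laplace integral to converge absolutely, and hence to stay bounded, on the open polyregion $\{\Re\lambda_{i}>\omega_{i}\}$; thus the domain of $F$ lies inside $\Omega(f)\cap\Omega_{b}(f)$ and the differentiation formula \eqref{izvodi} is applicable. Applying an arbitrary seminorm $p$, pulling it inside the integral (so that $p$ of an integral is dominated by the integral of $p$), and using the bound on $f$, one arrives at
\begin{align*}
p\Bigl(F^{(v_{1},\dots,v_{n})}(\lambda_{1},\dots,\lambda_{n})\Bigr)\leq M_{p}\prod_{i=1}^{n}\int_{0}^{+\infty}e^{-(\lambda_{i}-\omega_{i})t_{i}}t_{i}^{v_{i}}\,dt_{i}=M_{p}\prod_{i=1}^{n}\frac{v_{i}!}{(\lambda_{i}-\omega_{i})^{v_{i}+1}},
\end{align*}
where the multiple integral factorizes by Tonelli's theorem and each one-dimensional factor is the elementary Gamma integral $\int_{0}^{+\infty}e^{-st}t^{v}\,dt=v!/s^{v+1}$ valid for $s>0$. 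This is exactly \eqref{mprp}, and the argument uses nothing beyond sequential completeness.

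For the converse, assume $X$ is a Fr\'echet space and argue by induction on $n$, the case $n=1$ being Theorem \ref{waxl1}. For the inductive step I would freeze $(\lambda_{1},\dots,\lambda_{n-1})$ and treat each partial derivative $F_{\beta}:=\partial^{\beta}_{\lambda_{1},\dots,\lambda_{n-1}}F$, $\beta=(v_{1},\dots,v_{n-1})$, as a function of the single variable $\lambda_{n}$. By \eqref{mprp} it satisfies the one-dimensional Widder estimate in $\lambda_{n}$ with the constant $C_{\beta}(\lambda_{1},\dots,\lambda_{n-1}):=M_{p}\prod_{i<n}v_{i}!/(\lambda_{i}-\omega_{i})^{v_{i}+1}$, so Theorem \ref{waxl1} supplies its one-dimensional inverse $g_{\beta}(t_{n};\lambda_{1},\dots,\lambda_{n-1})$ with $p(g_{\beta})\leq C_{\beta}(\lambda_{1},\dots,\lambda_{n-1})\,e^{\omega_{n}t_{n}}$. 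Writing $g:=g_{0}$, the key point is the identity $\partial^{\beta}_{\lambda_{1},\dots,\lambda_{n-1}}g=g_{\beta}$ (for a.e.\ $t_{n}$); granting it, the function $(\lambda_{1},\dots,\lambda_{n-1})\mapsto g(t_{n};\lambda_{1},\dots,\lambda_{n-1})$ obeys the $(n-1)$-dimensional estimate \eqref{mprp} with constant $M_{p}e^{\omega_{n}t_{n}}$, so the inductive hypothesis yields, for each fixed $t_{n}$, a function $(t_{1},\dots,t_{n-1})\mapsto f(t_{1},\dots,t_{n})$ with the correct representation and the bound $p(f)\leq M_{p}e^{\omega_{1}t_{1}+\dots+\omega_{n}t_{n}}$. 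One then reassembles $f$ and verifies the full $n$-fold Laplace representation by Fubini's theorem.

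The main obstacle is exactly the identity $\partial^{\beta}_{\lambda_{1},\dots,\lambda_{n-1}}g=g_{\beta}$ together with the joint measurability of $g$ in $(t_{n},\lambda_{1},\dots,\lambda_{n-1})$ and, at the end, the joint local integrability of $f$ on $[0,+\infty)^{n}$. The identity is where uniqueness of the one-dimensional (vector-valued) Laplace transform enters: $g_{\beta}(\cdot;\lambda_{1},\dots,\lambda_{n-1})$ is by construction the inverse of $F_{\beta}(\cdot)$ in $\lambda_{n}$, and differentiating the representation of $g$ in the frozen variables via difference quotients, controlled by the uniform bounds through dominated convergence, shows that these parameter-derivatives coincide with the $g_{\beta}$. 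This is also where the Fr\'echet hypothesis is genuinely used: the countable defining system of seminorms and metrizability make the one-dimensional Lipschitz-Radon-Nikodym property (with its automatic local integrability of the derivative) applicable slice by slice with control measurable in the remaining variables, and they legitimize the Fubini interchange needed to pass from the iterated inversion to a bona fide function of $n$ variables and from the iterated Laplace integral to the multiple integral in (ii). I would keep the parameter differentiation under control by working from the explicit Lipschitz primitive $h_{1}$ of part (iii)$'$ of Theorem \ref{waxl} and differentiating that construction in the frozen variables.
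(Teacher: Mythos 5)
Your treatment of (ii) $\Rightarrow$ (i) is correct and coincides with the paper's: the exponential bound gives absolute convergence, \eqref{izvodi} applies, and pulling the seminorm inside the factorizing integral yields \eqref{mprp}. Your converse also has the same architecture as the paper's proof — induction on $n$, Theorem \ref{waxl1} as base case and as the one-variable inversion step, freezing all variables but one, and reassembly by Fubini — so you have not taken a different route; you have taken the paper's route while leaving its one genuinely hard step as a sketch, and that sketch contains a gap.

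The gap is the step you yourself flag as "the main obstacle." The identity $\partial^{\beta}_{\mu}g=g_{\beta}$ "for a.e.\ fixed $t_{n}$" is not even well-posed as you set it up: for each frozen $\mu=(\lambda_{1},\dots,\lambda_{n-1})$, Theorem \ref{waxl1} determines $g(\cdot\,;\mu)$ only up to a null set of $t_{n}$ \emph{depending on} $\mu$, so difference quotients in $\mu$ at a fixed $t_{n}$ cannot be formed, and neither dominated convergence nor Laplace-transform uniqueness repairs this — the difficulty is a quantifier swap (from "for every $\mu$, for a.e.\ $t_{n}$" to "for a.e.\ $t_{n}$, for every $\mu$"), not a convergence issue. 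The paper resolves it with a device your proposal never mentions: since $X$ is Fr\'echet, the Post--Widder inversion formula applies and gives
\begin{align*}
f_{\lambda_{2},\dots,\lambda_{n}}\bigl(t_{1}\bigr)=\lim_{k\rightarrow+\infty}\frac{(-1)^{k}}{k!}\Bigl(\frac{k}{t_{1}}\Bigr)^{k+1}F^{(k,0,\dots,0)}\Bigl(\frac{k}{t_{1}},\lambda_{2},\dots,\lambda_{n}\Bigr),\quad t_{1}\in(0,+\infty)\setminus N,
\end{align*}
with one fixed null set $N$; the right-hand side is an explicit function of the frozen parameters, which the paper then differentiates in those parameters by a difference-quotient estimate (mean value theorem, the bound \eqref{mprp} on second derivatives, and the polar formula $p(x)=\sup_{x^{\ast}\in U_{p}^{\circ}}|\langle x^{\ast},x\rangle|$), arriving at \eqref{vb} and hence at the $(n-1)$-dimensional Widder estimate with constant $M_{p}e^{\omega_{1}t_{1}}$, which is what the induction hypothesis needs. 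This is also the true role of the Fr\'echet hypothesis — it licenses the Post--Widder formula — not, as you assert, metrizability "legitimizing Fubini" or slice-wise applicability of the Lipschitz--Radon--Nikodym property. Your fallback of differentiating the primitive $h_{1}$ of Theorem \ref{waxl}(iii)$'$ in the frozen variables is closer to viable, since $h_{1}(\cdot\,;\mu)$ is everywhere defined and canonical by uniqueness; but then you must actually prove that $\mu\mapsto h_{1}(t_{n};\mu)$ is smooth with the correct Lipschitz-in-$t_{n}$ bounds on all parameter derivatives, and justify interchanging $\partial^{\beta}_{\mu}$ with the a.e.\ $t_{n}$-derivative supplied by the Lipschitz--Radon--Nikodym property — the same quantifier problem in different clothing. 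As written, the proposal identifies the obstacle but does not supply the idea that overcomes it.
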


\begin{proof}
If (ii) holds, then 
the mapping $(\lambda_{1},...,\lambda_{n}) \mapsto F(\lambda_{1},...,\lambda_{n}),$ $\Re \lambda_{1}>\omega_{1},...,$ $ \Re \lambda_{n}>\omega_{n}$ is analytic and \eqref{izvodi} holds 
for $(v_{1},...,v_{n}) \in {\mathbb N}_{0}$ and $\Re \lambda_{1}>\omega_{1},...,\ \Re \lambda_{n}>\omega_{n}.$
Since 
for each seminorm $p\in \circledast$ we have $p( f( t_{1},...,t_{n})) \leq M_{p}e^{\omega_{1} t_{1}+...+\omega_{n}t_{n}},$ $t=(t_{1},...,t_{n})\in [0,+\infty)^{n}$, (i) follows from a simple computation involving the Fubini theorem. 

Assume now that (i) holds and $X$ is a Fr\' echet space. We will prove that (ii) holds by induction on $n.$ By Theorem \ref{waxl1}, (ii) is true in dimension $n=1;$ suppose that (ii) is true in any dimension $<n,$ with the meaning clear. Let us fix $\lambda_{2}>\omega_{2},...,\ \lambda_{n}>\omega_{n}$ and let us consider the function 
$F_{\lambda_{2},...,\lambda_{n}} : (\omega_{1},+\infty) \rightarrow X$ defined by $F_{\lambda_{2},...,\lambda_{n}}(\lambda_{1}):=F(\lambda_{1},\lambda_{2},...,\lambda_{n}),$ $\lambda_{1}>\omega_{1}.$
Since for each seminorm $p\in \circledast$ we have 
\begin{align*} &
p\Bigl( F^{(v_{1})}\bigl( \lambda_{1}\bigr)\Bigr)=p\Bigl( F^{(v_{1},0,...,0)} \bigl(\lambda_{1},\lambda_{2},...,\lambda_{n}\bigr) \Bigr)
\\& \leq M_{p}\frac{v_{1}!}{\bigl(\lambda_{1}-\omega_{1}\bigr)^{v_{1}+1}\bigl(\lambda_{2}-\omega_{2}\bigr)\cdots \bigl(\lambda_{n}-\omega_{n}\bigr)},\quad v_{1}\in {\mathbb N}_{0},\ \lambda_{1}>\omega_{1},
\end{align*}
an application of Theorem \ref{waxl1} yields the existence of a locally integrable function $f_{\lambda_{2},...,\lambda_{n}} : [0,+\infty) \rightarrow X$ such that
\begin{align}\label{123}
F\bigl(\lambda_{1},\lambda_{2},...,\lambda_{n}\bigr)=\int^{+\infty}_{0}e^{-\lambda_{1}t_{1}}f_{\lambda_{2},...,\lambda_{n}}\bigl( t_{1}\bigr)\, dt_{1},\quad \lambda_{1}>\omega_{1},
\end{align}
and for each seminorm $p\in \circledast$ we have
\begin{align}\label{idijot}
p\Bigl( f_{\lambda_{2},...,\lambda_{n}}\bigl( t_{1}\bigr)\Bigr) \leq M_{p}e^{\omega_{1}t_{1}}\bigl(\lambda_{2}-\omega_{2}\bigr)^{-1}\cdot ... \cdot \bigl(\lambda_{n}-\omega_{n}\bigr)^{-1},\quad t_{1}\geq 0.
\end{align}
Since $X$ is a Fr\' echet space, the Post-Widder inversion formula and \eqref{123}-\eqref{idijot} together imply that there exists a Lebesgue measurable set $N\subseteq (0,\infty)$ whose Lebesgue measure is equal to zero, such that
\begin{align*}
f_{\lambda_{2},...,\lambda_{n}}\bigl( t_{1}\bigr)=\lim_{k\rightarrow +\infty}\frac{(-1)^{k}}{k!}\Biggl( \frac{k}{t_{1}}\Biggr)^{k+1}F^{(k,0,...,0)}\Biggl(\frac{k}{t_{1}},\lambda_{2},...,\lambda_{n}\Biggr),\ t_{1} \in (0,+\infty) \setminus N.
\end{align*} 
Now we will prove that, for every $t_{1}\in (0,+\infty) \setminus N,$ the mapping $(\lambda_{2},...,\lambda_{n}) \rightarrow f_{\lambda_{2},...,\lambda_{n}} ( t_{1} ),$ $\lambda_{2}>\omega_{2},...,\ \lambda_{n}>\omega_{n}$ is infinitely differentiable as well as that 
\begin{align}\label{vb} 
\frac{\partial^{v_{2}+...+v_{n}}}{\partial \lambda_{2}^{v_{2}}\cdot ... \cdot  \partial \lambda_{n}^{v_{n}}}
f_{\lambda_{2},...,\lambda_{n}}\bigl( t_{1}\bigr)=\lim_{k\rightarrow +\infty}\frac{(-1)^{k}}{k!}\Biggl( \frac{k}{t_{1}}\Biggr)^{k+1}F^{(k,v_{2},...,v_{n})}\Biggl(\frac{k}{t_{1}},\lambda_{2},...,\lambda_{n}\Biggr),
\end{align} 
for any $t_{1} \in (0,+\infty) \setminus N.$ Suppose first that $v_{2}=1$ and $v_{3}=...=v_{n}=0.$ Let $\delta>0$ be such that $\lambda_{2}-\delta>\omega_{2}$ and let $h\in (-\delta,\delta).$ If $p\in \circledast$, then we have{\scriptsize
\begin{align*} &
p\Biggl( \frac{f_{\lambda_{2}+h,...,\lambda_{n}}\bigl( t_{1}\bigr)-f_{\lambda_{2},...,\lambda_{n}}\bigl( t_{1}\bigr)}{h}-\lim_{k\rightarrow +\infty}\frac{(-1)^{k}}{k!}\Bigl( \frac{k}{t_{1}}\Bigr)^{k+1}F^{(k,1,0,...,0)}\bigl(  k/t_{1},\lambda_{2},...,\lambda_{n} \bigr)\Biggr)
\\& =\sup_{x^{\ast} \in U_{p}^{\circ}} \Biggl| \Biggl \langle x^{\ast} ,\frac{f_{\lambda_{2}+h,...,\lambda_{n}}\bigl( t_{1}\bigr)-f_{\lambda_{2},...,\lambda_{n}}\bigl( t_{1}\bigr)}{h}-\lim_{k\rightarrow +\infty}\frac{(-1)^{k}}{k!}\Bigl( \frac{k}{t_{1}}\Bigr)^{k+1}F^{(k,1,0,...,0)}\bigl( \frac{k}{t_{1}},\lambda_{2},...,\lambda_{n} \bigr)\Biggr \rangle \Biggr|
\\& \leq \sup_{x^{\ast} \in U_{p}^{\circ}}\limsup_{k\rightarrow +\infty}\frac{1}{k!}\Bigl( \frac{k}{t_{1}}\Bigr)^{k+1}
\\& \times \Biggl| \Biggl \langle x^{\ast} ,   \frac{F^{(k,0,0,...,0)}\bigl( \frac{k}{t_{1}},\lambda_{2}+h,...,\lambda_{n} \bigr)-F^{(k,0,0,...,0)}\bigl( \frac{k}{t_{1}},\lambda_{2},...,\lambda_{n} \bigr)-hF^{(k,1,0,...,0)}\bigl( \frac{k}{t_{1}},\lambda_{2},...,\lambda_{n} \bigr)}{h}      \Biggr \rangle \Biggr|
\end{align*}
\begin{align*}
\\ & \leq h\sup_{x^{\ast} \in U_{p}^{\circ}}\limsup_{k\rightarrow +\infty}\Biggl[\frac{1}{k!}\Bigl( \frac{k}{t_{1}}\Bigr)^{k+1} \sup_{\lambda' \in [\lambda_{2}-\delta,\lambda_{2}+\delta]}\Biggl| \Biggl \langle x^{\ast} , F^{(k,2,0,...,0)}\bigl( k/t_{1},\lambda_{2},...,\lambda_{n} \bigr) \Biggr \rangle \Biggr| \Biggr]
\\& \leq h\limsup_{k\rightarrow +\infty}\Biggl[\frac{1}{k!}\Bigl( \frac{k}{t_{1}}\Bigr)^{k+1} \sup_{\lambda' \in [\lambda_{2}-\delta,\lambda_{2}+\delta]}p\Biggl( F^{(k,2,0,...,0)}\bigl( k/t_{1},\lambda_{2},...,\lambda_{n} \bigr)\Biggr)\Biggr]
\\& \leq  h\bigl( \lambda_{2}-\delta-\omega_{2})^{-3}\bigl(\lambda_{3}-\omega_{3}\bigr)^{-1}\cdot ... \cdot \bigl(\lambda_{n}-\omega_{n}\bigr)^{-1}\limsup_{k\rightarrow +\infty}\Biggl[\frac{1}{k!}\Bigl( \frac{k}{t_{1}}\Bigr)^{k+1}\frac{2M_{p}k!}{((k/t_{1})-\omega_{1})^{k+1}}\Biggr]
\\& =2M_{p}h\bigl( \lambda_{2}-\delta-\omega_{2})^{-3}\bigl(\lambda_{3}-\omega_{3}\bigr)^{-1}\cdot ... \cdot \bigl(\lambda_{n}-\omega_{n}\bigr)^{-1}e^{\omega_{1}t_{1}} \rightarrow 0,
\end{align*}}
as $h\rightarrow 0.$ Repeating this argumentation, we obtain that \eqref{vb} holds
for $\lambda_{2}>\omega_{2},...,\ \lambda_{n}>\omega_{n}$ and $(v_{2},...,v_{n})\in {\mathbb N}_{0}^{n-1}.$ 

Further on, by the induction hypothesis, for every $t_{1}\in (0,+\infty) \setminus N,$ there exists a locally integrable function $f_{t_{1}} : [0,+\infty)^{n-1} \rightarrow X$ such that{\small
\begin{align}\label{du}
f_{\lambda_{2},...,\lambda_{n}}\bigl( t_{1}\bigr) =\int^{+\infty}_{0}...\int^{+\infty}_{0}e^{-\lambda_{2}t_{2}-...-\lambda_{n}t_{n}}f_{t_{1}} \bigl(t_{2},...,t_{n}\bigr)\, dt_{2}\, ...\, dt_{n},\ \lambda_{i}>\omega_{i}\ \ (2\leq i\leq n),
\end{align}}
and for each seminorm $p\in \circledast$ we have
\begin{align}\label{idijotq}
p\Bigl( f_{t_{1}} \bigl(t_{2},...,t_{n}\bigr)\Bigr) \leq M_{p}e^{\omega_{1}t_{1}+...+\omega_{n}t_{n}},\quad t_{1}\in (0,+\infty) \setminus N,\ \bigl(t_{2},...,t_{n}\bigr) \in [0,+\infty)^{n-1}.
\end{align}
Define now $f(t_{1},...,t_{n}):=f_{t_{1}} (t_{2},...,t_{n})$ for a.e. $(t_{1},...,t_{n})\in [0,+\infty)^{n}$; then we have $f\in L_{loc}^{1}([0,+\infty)^{n} : X).$ Using the Fubini theorem, \eqref{123} and \eqref{idijotq}, we get 
\begin{align*}&
F\bigl(\lambda_{1},\lambda_{2},...,\lambda_{n}\bigr)=\int^{+\infty}_{0}e^{-\lambda_{1}t_{1}}f_{\lambda_{2},...,\lambda_{n}}\bigl( t_{1}\bigr)\, dt_{1}
\\ & = \int^{+\infty}_{0}e^{-\lambda_{1}t_{1}}\int^{+\infty}_{0}...\int^{+\infty}_{0}e^{-\lambda_{2}t_{2}-...-\lambda_{n}t_{n}}f_{t_{1}} \bigl(t_{2},...,t_{n}\bigr)\, dt_{2}\, ...\, dt_{n} \, dt_{1}
\\& = \int^{+\infty}_{0}...\int^{+\infty}_{0}e^{-\lambda_{1}t_{1}-\lambda_{2}t_{2}-...-\lambda_{n}t_{n}}f \bigl(t_{1},t_{2},...,t_{n}\bigr)\, dt_{1}\, ...\, dt_{n},
\end{align*}
for $\lambda_{1}>\omega_{1},...,\ \lambda_{n}>\omega_{n}.$ Keeping in mind the estimate \eqref{idijotq}, this implies that (ii) is true in dimension $n,$ which completes the proof of theorem. 
\end{proof}

\section{Integrated form of multidimensional Widder--Arendt theorem in arbitrary locally convex spaces}\label{kon}

In this section, we will prove the following integrated form of Widder--Arendt theorem in arbitrary locally convex spaces:

\begin{thm}\label{maha}
Suppose that $\omega_{1}\in {\mathbb R},...,$ $\omega_{n}\in {\mathbb R}$, $F : (\omega_{1},+\infty) \times ... \times (\omega_{n},+\infty) \rightarrow X$ and $(M_{p})_{p\in \circledast}$ is a family of positive real numbers.  
Then the following statements are equivalent:
\begin{itemize}
\item[(i)] $F(\cdot)$ is infinitely differentiable and the estimate \eqref{mprp} holds for any $p\in \circledast$, $\lambda_{1}>\omega_{1},..., $ $\lambda_{n}>\omega_{n} $ and $(v_{1},..., v_{n}) \in {\mathbb N}_{0}^{n}.$
\item[(i)'] There exist real numbers  $a_{1}\geq \max(\omega_{1},0),...,$ $a_{n}\geq \max(\omega_{n},0) $ such that  $F_{| (a_{1},+\infty)\times ... \times  (a_{n},+\infty)}(\cdot)$ is infinitely differentiable and for each seminorm $p\in \circledast$ the estimate \eqref{mprp} holds
for any $\lambda_{1}>a_{1},..., $ $\lambda_{n}>a_{n} $ and $(v_{1},..., v_{n}) \in {\mathbb N}_{0}^{n}.$
\item[(ii)] $F(\cdot )$ admits an analytical extension to the region $\{z_{1}  \in {\mathbb C} : \Re z_{1}>\omega_{1}\} \times ... \times \{z_{n}  \in {\mathbb C} : \Re z_{n}>\omega_{n}\},$ denoted by the same symbol, and for each seminorm $p\in \circledast$ we have
\begin{align*}
p\Bigl( F^{(v_{1},...,v_{n})}\bigl( \lambda_{1}, ..., \lambda_{n}\bigr) \Bigr) \leq M_{p}\frac{v_{1}! \cdot ... \cdot   v_{n}!}{\bigl(\Re \lambda_{1}-\omega_{1}\bigr)^{v_{1}+1}\cdot ... \cdot \bigl(\Re \lambda_{n}-\omega_{n}\bigr)^{v_{n}+1}}, 
\end{align*}
for any $\Re \lambda_{1}>\omega_{1},..., $ $\Re \lambda_{n}>\omega_{n} $ and $(v_{1},..., v_{n}) \in {\mathbb N}_{0}^{n}.$
\item[(iii)] We have \emph{(iii.1)-(iii.2)}, where:
\begin{itemize}
\item[(iii.1)]
For every $r_{1} \in (0,1],...$, $r_{n} \in (0,1],$ there exists a continuous function $f_{r_{1},...,r_{n}} : [0,+\infty) \rightarrow X$ such that $f_{r_{1},...,r_{n}}(0)=0$, for each seminorm $p\in \circledast$ we have
\begin{align}\notag &
p\Bigl(f_{r_{1},...,r_{n}}\bigl(t_{1},...,t_{n}\bigr)\Bigr)
\\ \label{prc0}& \leq M_{p}\int^{t_{1}}_{0}\cdots \int^{t_{n}}_{0}g_{r_{1}}\bigl(t_{1}-s_{1}\bigr) \cdot ...\cdot g_{r_{n}}\bigl(t_{n}-s_{n}\bigr)e^{\omega_{1}s_{1}+...+\omega_{n}s_{n}}\, ds_{1} ...\, ds_{n},\ 
\end{align}
for any $\bigl(t_{1},...,t_{n}\bigr) \in [0,+\infty)^{n},$ and
\begin{align}\label{prc1}
F\bigl( \lambda_{1},...,\lambda_{n}\bigr)=\lambda_{1}^{r_{1}}\cdot ... \cdot \lambda_{n}^{r_{n}}\int^{+\infty}_{0}...\int^{+\infty}_{0}e^{-\lambda_{1}t_{1}-...-\lambda_{n}t_{n}}f_{r_{1},...,r_{n}}\bigl(t_{1},...,t_{n}\bigr)\, dt_{1}\, ...\, dt_{n},
\end{align} 
for $\lambda_{1}>\max\bigl(\omega_{1},0\bigr),..., $ $\lambda_{n}>\max\bigl(\omega_{n},0\bigr)$.
\item[(iii.2)] For each seminorm $p\in \circledast$ and for each functional $x^{\ast} \in U_{p}^{\circ}$, there exists a locally integrable function $f_{x^{\ast}} : [0,+\infty)^{n} \rightarrow X$ such that $p(f_{x^{\ast}}(t_{1},...,t_{n})) \leq M_{p}\exp(\omega_{1}t_{1}+...+\omega_{n}t_{n})$, $(t_{1},...,t_{n}) \in [0,+\infty)^{n}$ and
\begin{align}\label{nl}
\Bigl \langle x^{\ast},  F\bigl( \lambda_{1},...,\lambda_{n}\bigr)\Bigr \rangle =\int^{+\infty}_{0}...\int^{+\infty}_{0}e^{-\lambda_{1}t_{1}-...-\lambda_{n}t_{n}}\Bigl \langle x^{\ast},  f_{x^{\ast}}\bigl(t_{1},...,t_{n}\bigr) \Bigr \rangle \, dt_{1}\, ...\, dt_{n},
\end{align} 
for any $\lambda_{1}>\max\bigl(\omega_{1},0\bigr),..., $ $\lambda_{n}>\max\bigl(\omega_{n},0\bigr)$.
\end{itemize}
If this is the case, then we have{\small
\begin{align}\notag &
\Bigl \langle x^{\ast},  f_{r_{1},...,r_{n}}\bigl(t_{1},...,t_{n}\bigr)\Bigr \rangle 
\\\label{adf}& =\int^{t_{1}}_{0}\cdots \int^{t_{n}}_{0}g_{r_{1}}\bigl(t_{1}-s_{1}\bigr) \cdot ... \cdot g_{r_{n}}\bigl(t_{n}-s_{n}\bigr)\Bigl \langle x^{\ast},f_{x^{\ast}}\bigl(s_{1},...,s_{n}\bigr) \Bigr \rangle \, ds_{1} ... \, ds_{n},
\end{align}}
for any $x^{\ast} \in U_{p}^{\circ}$ and $(t_{1},...,t_{n})\in [0,+\infty)^{n}.$
\item[(iii)']
We have \emph{(iii.1)'-(iii.2)'}, where:
\begin{itemize}
\item[(iii.1)']
For every $r_{1} \in (0,1],...$, $r_{n} \in (0,1],$ there exists a continuous function $h_{r_{1},...,r_{n}} : [0,+\infty)^{n} \rightarrow X$ such that $h_{r_{1},...,r_{n}}(0)=0$, for each seminorm $p\in \circledast$ we have
\begin{align}\label{prc0w}
p\Bigl(h_{r_{1},...,r_{n}}\bigl(t_{1},...,t_{n}\bigr)\Bigr)
 \leq M_{p}g_{r_{1}+1}\bigl(t_{1}\bigr) \cdot ...\cdot g_{r_{n}+1}\bigl(t_{n}\bigr),\quad \bigl(t_{1},...,t_{n}\bigr) \in [0,+\infty)^{n},
\end{align}
and
\begin{align}\notag &
F\bigl( \lambda_{1},...,\lambda_{n}\bigr)=\bigl(\lambda_{1}-\omega_{1}\bigr)^{r_{1}}\cdot ... \cdot \bigl(\lambda_{n}-\omega_{n}\bigr)^{r_{n}}
\\& \label{prc1w} \times \int^{+\infty}_{0}...\int^{+\infty}_{0}e^{-(\lambda_{1}-\omega_{1})t_{1}-...-(\lambda_{n}-\omega_{n})t_{n}}h_{r_{1},...,r_{n}}\bigl(t_{1},...,t_{n}\bigr)\, dt_{1}\, ...\, dt_{n},
\end{align} 
for any $\lambda_{1}> \omega_{1} ,..., $ $\lambda_{n}> \omega_{n} $.
\item[(iii.2)'] For each seminorm $p\in \circledast$ and for each functional $x^{\ast} \in U_{p}^{\circ}$, there exists a locally integrable function $h_{x^{\ast}} : [0,+\infty)^{n} \rightarrow X$ such that $p(h_{x^{\ast}}(t_{1},...,t_{n})) \leq M_{p}$, $(t_{1},...,t_{n}) \in [0,+\infty)^{n}$ and{\small
\begin{align*}
\Bigl \langle x^{\ast},  F\bigl( \lambda_{1},...,\lambda_{n}\bigr)\Bigr \rangle =\int^{+\infty}_{0}...\int^{+\infty}_{0}e^{-(\lambda_{1}-\omega_{1})t_{1}-...-(\lambda_{n}-\omega_{n})t_{n}}\Bigl \langle x^{\ast},  h_{x^{\ast}}\bigl(t_{1},...,t_{n}\bigr) \Bigr \rangle \, dt_{1}\, ...\, dt_{n},
\end{align*}}
for any $\lambda_{1}>\omega_{1},..., $ $\lambda_{n}>\omega_{n}$.
\end{itemize}
\end{itemize}
\end{thm}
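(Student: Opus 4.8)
The plan is to reduce every multidimensional assertion to Theorem \ref{waxl} one variable at a time and to the bipolar duality $p(x)=\sup_{x^{\ast}\in U_{p}^{\circ}}|\langle x^{\ast},x\rangle|$, handling the scalar parts by the finite-dimensional ($X=\mathbb{C}$) instance of Theorem \ref{wa2}, which is licit since $\mathbb{C}$ has the Radon--Nikodym property. Throughout I write $\omega\cdot t:=\omega_{1}t_{1}+\cdots+\omega_{n}t_{n}$. I would organize the equivalences around (i) as a hub: first the analytic block (i) $\Leftrightarrow$ (i)' $\Leftrightarrow$ (ii), then (i) $\Leftrightarrow$ (iii), and finally (iii) $\Leftrightarrow$ (iii)' by a coordinate shift.

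For the analytic block, (i) $\Rightarrow$ (i)' is immediate (take $a_{i}=\max(\omega_{i},0)$) and (ii) $\Rightarrow$ (i) follows by restriction to real arguments. For (i)' $\Rightarrow$ (ii) I would freeze all but one variable and invoke the one-variable implication (i)' $\Rightarrow$ (ii) of Theorem \ref{waxl}: fixing real $\lambda_{2},\dots,\lambda_{n}$, the estimate \eqref{mprp} with $v_{2}=\cdots=v_{n}=0$ is exactly a one-dimensional Widder estimate in $\lambda_{1}$ with constant $M_{p}\prod_{j\ge 2}(\lambda_{j}-\omega_{j})^{-1}$, so $\lambda_{1}\mapsto F(\lambda_{1},\dots,\lambda_{n})$ extends analytically to $\{\Re\lambda_{1}>\omega_{1}\}$. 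The key feature, already built into Theorem \ref{waxl}, is that the extension reaches all the way back to $\Re\lambda_{1}>\omega_{1}$ although the hypothesis is posed only on $(a_{1},+\infty)$, because the bound $\sim(\lambda_{1}-\omega_{1})^{-(v_{1}+1)}$ gives the Taylor series in $\lambda_{1}$ radius of convergence reaching $\omega_{1}$. Iterating over the coordinates yields separate analyticity on the product of half-planes, and the local boundedness coming from the $v=0$ estimate upgrades this to joint analyticity; the mixed-derivative bound in (ii) then follows from the one-dimensional bounds together with Cauchy estimates in the remaining variables.

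For (i) $\Leftrightarrow$ (iii) I would treat the two halves separately. For (iii.2), fix $p$ and $x^{\ast}\in U_{p}^{\circ}$ and apply Theorem \ref{wa2} with $X=\mathbb{C}$ to $\langle x^{\ast},F(\cdot)\rangle$: since $|\langle x^{\ast},F^{(v_{1},\dots,v_{n})}(\lambda)\rangle|\le \|x^{\ast}\|_{p}^{\ast}\,p(F^{(v_{1},\dots,v_{n})}(\lambda))$ with $\|x^{\ast}\|_{p}^{\ast}:=\sup_{x\in U_{p}}|\langle x^{\ast},x\rangle|\le 1$, the scalar function inherits a Widder estimate with constant $\|x^{\ast}\|_{p}^{\ast}M_{p}$ and hence admits a scalar density $\phi_{x^{\ast}}$ with $|\phi_{x^{\ast}}(t)|\le \|x^{\ast}\|_{p}^{\ast}M_{p}e^{\omega\cdot t}$. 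I would realize $\phi_{x^{\ast}}$ as $\langle x^{\ast},f_{x^{\ast}}(\cdot)\rangle$ with $p(f_{x^{\ast}})\le M_{p}e^{\omega\cdot t}$ by the single-vector lift $f_{x^{\ast}}(t):=\phi_{x^{\ast}}(t)x_{0}$, choosing $x_{0}$ with $\langle x^{\ast},x_{0}\rangle=1$ and $p(x_{0})$ as close as desired to $(\|x^{\ast}\|_{p}^{\ast})^{-1}$; the factor $\|x^{\ast}\|_{p}^{\ast}$ in the bound for $\phi_{x^{\ast}}$ is precisely what compensates. Conversely, differentiating \eqref{nl} under the integral and taking the supremum over $x^{\ast}\in U_{p}^{\circ}$ recovers \eqref{mprp} through the bipolar identity. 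For (iii.1) I would build $f_{r_{1},\dots,r_{n}}$ by iterating the one-dimensional construction of part (iii) of Theorem \ref{waxl} coordinate by coordinate, exactly as the vector density was constructed in the proof of Theorem \ref{wa2} but now via the fractional regularization, which needs no Radon--Nikodym property, and verify the pointwise bound \eqref{prc0}, which factorizes as $M_{p}\prod_{i}(g_{r_{i}}\ast_{0}e^{\omega_{i}\cdot})(t_{i})$. The compatibility relation \eqref{adf} then follows by pairing \eqref{prc1} with $x^{\ast}$, comparing with \eqref{nl}, and invoking uniqueness of the Laplace transform together with $\widehat{g_{r_{i}}}(\lambda_{i})=\lambda_{i}^{-r_{i}}$, which identifies division by $\lambda_{i}^{r_{i}}$ with convolution by $g_{r_{i}}$.

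Finally, (iii) $\Leftrightarrow$ (iii)' is a shift: setting $G(\lambda_{1},\dots,\lambda_{n}):=F(\lambda_{1}+\omega_{1},\dots,\lambda_{n}+\omega_{n})$ reduces \eqref{mprp} to the case $\omega_{1}=\cdots=\omega_{n}=0$, in which (iii) and (iii)' coincide under the identifications $h_{r_{1},\dots,r_{n}}=f^{G}_{r_{1},\dots,r_{n}}$ and $h_{x^{\ast}}=f^{G}_{x^{\ast}}$; indeed \eqref{prc0} with $\omega=0$ becomes $M_{p}\prod_{i}g_{r_{i}+1}(t_{i})$, which is exactly \eqref{prc0w}. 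I expect the main obstacle to be the construction in (iii.1): carrying out the iterated one-dimensional regularization so that $f_{r_{1},\dots,r_{n}}$ is a genuinely jointly continuous function of $(t_{1},\dots,t_{n})$ satisfying the sharp convolution bound \eqref{prc0}, using only sequential completeness and no Radon--Nikodym property. The scalar lift in (iii.2), while elementary, is the other delicate point, since the passage between the seminorm bound and the functional bound must be carried out with the correct normalization constant $\|x^{\ast}\|_{p}^{\ast}$.
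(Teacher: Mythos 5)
Your overall architecture --- the hub at (i)/(ii), scalarization through $x^{\ast}\in U_{p}^{\circ}$ and the scalar ($X={\mathbb C}$) case of Theorem \ref{wa2} for (iii.2), the norm formula $p(x)=\sup_{x^{\ast}\in U_{p}^{\circ}}|\langle x^{\ast},x\rangle|$ for the converse direction, and the shift $G(\lambda):=F(\lambda+\omega)$ for (iii) $\Leftrightarrow$ (iii)$'$ --- coincides with the paper's. But the implication (ii) $\Rightarrow$ (iii.1), which is the substantive content of the theorem, is not actually proved in your proposal, and the route you sketch for it is precisely the one that fails. You propose to iterate the one-dimensional construction of Theorem \ref{waxl}(iii) coordinate by coordinate. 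After the first step this produces, for fixed real $\lambda_{2},\dots,\lambda_{n}$, a continuous function $t_{1}\mapsto f_{r_{1}}(t_{1};\lambda_{2},\dots,\lambda_{n})$; to iterate, you must show that for each \emph{fixed} $t_{1}$ the map $(\lambda_{2},\dots,\lambda_{n})\mapsto f_{r_{1}}(t_{1};\lambda_{2},\dots,\lambda_{n})$ is again infinitely differentiable and satisfies Widder-type estimates. The only available tool for this is a Post--Widder-type inversion recovering $f_{r_{1}}(t_{1};\cdot)$ pointwise from derivatives of $F$, and this is doubly problematic: in a general SCLCS such pointwise inversion is exactly what is unavailable (even Theorem \ref{wa2}, where the Lipschitz--Radon--Nikodym property is assumed, needs $X$ Fr\'echet for precisely this step of its induction), and for $r_{1}<1$ the inversion computation breaks down altogether --- the draft two-dimensional section left in the source after the bibliography attempts exactly this iteration, handles only $r_{1}=r_{2}=1$ with considerable effort, and records the author's own note that the Post--Widder computation does not go through for $r_{1}<1$. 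Joint continuity of the iterated limit in $(t_{1},\dots,t_{n})$ would be a further unresolved issue. The paper's actual proof avoids iteration entirely: it convolves each scalar density $f_{x^{\ast}}$ with $g_{r_{1}}(t_{1}-s_{1})\cdots g_{r_{n}}(t_{n}-s_{n})$, defines $F_{t}(x^{\ast}):=\langle x^{\ast},f_{r_{1},\dots,r_{n};x^{\ast}}(t)\rangle$, shows that $F_{t}$ is a linear functional on $X^{\ast}$ bounded on equicontinuous sets (linearity coming from the uniqueness theorem for the multidimensional Laplace transform), hence belongs to the Xiao--Liang space $\tilde{X}\subseteq X^{\ast\ast}$, proves $t\mapsto F_{t}$ continuous, and then pulls $F_{t}$ back to $X$ via the topological isomorphism $X\cong\tilde{X}$ and the multidimensional Post--Widder formula. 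This bidual gluing is the key idea of the whole proof, it is absent from your proposal, and you yourself flag (iii.1) as ``the main obstacle'' --- an accurate self-assessment, but it means the theorem is not proved.

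A secondary flaw: your rank-one lift in (iii.2) does not yield the stated constant. With $|\phi_{x^{\ast}}(t)|\leq\|x^{\ast}\|_{p}^{\ast}M_{p}e^{\omega\cdot t}$ and $x_{0}$ chosen so that $\langle x^{\ast},x_{0}\rangle=1$, one always has $p(x_{0})\geq(\|x^{\ast}\|_{p}^{\ast})^{-1}$, with equality only if the supremum defining $\|x^{\ast}\|_{p}^{\ast}$ is attained; in general it is not (already for $X=c_{0}$ with a suitable $x^{\ast}\in\ell^{1}$), so your construction gives only $p(f_{x^{\ast}}(t))\leq(1+\varepsilon)M_{p}e^{\omega\cdot t}$ for arbitrary $\varepsilon>0$, not the bound with the exact family $(M_{p})_{p\in\circledast}$ that (iii.2) demands. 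The claim that the factor $\|x^{\ast}\|_{p}^{\ast}$ ``precisely compensates'' is therefore wrong as stated; this compensation works exactly only under norm attainment.
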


\begin{proof}
It is clear that (ii) implies (i) and (i)'. If (i)' holds, then we can use the usual series expansion to prove (ii); see also the proof of \cite[Theorem 2.1, p. 78]{li-shaw}. Therefore, the assertions (i), (i)' and (ii) are equivalent. 

If (iii) holds, then we can use the convolution theorem for Laplace transform and the uniqueness theorem for Laplace transform in order to see that \eqref{adf} holds for any $x^{\ast} \in U_{p}^{\circ}$ and $(t_{1},...,t_{n})\in [0,+\infty)^{n};$ cf. \cite{mvlt} for more details.
Now we will prove that (iii) implies (ii). Let $r_{1}=...=r_{n}=1 $ and $p\in \circledast$. Using \eqref{prc0}-\eqref{prc1}, it readily follows that $F(\cdot )$ admits an analytical extension to the region $\{z_{1}  \in {\mathbb C} : \Re z_{1}>\omega_{1}\} \times ... \times \{z_{n}  \in {\mathbb C} : \Re z_{n}>\omega_{n}\}.$ Keeping in mind
this fact and the uniqueness theorem for analytic functions, it follows that \eqref{nl} holds for any $\Re \lambda_{1}> \omega_{1} ,..., $ $\Re \lambda_{n}> \omega_{n} $. Now,
by the generalization of norm formula given in \cite[Proposition 22.14]{meise}, we have{\small
\begin{align*} &
p\Bigl(  F^{(v_{1},...,v_{n})}\bigl( \lambda_{1}, ..., \lambda_{n}\bigr)\Bigr)=\sup_{x^{\ast}\in U_{p}^{\circ}}\Biggl|  \frac{\partial^{v_{1}+...+v_{n}}}{\partial \lambda_{1}^{v_{1}}\cdot ... \cdot \partial \lambda_{n}^{v_{n}}} \Bigl \langle x^{\ast},  F\bigl( \lambda_{1},...,\lambda_{n}\bigr)\Bigr \rangle  \Biggr|
\\& =  \sup_{x^{\ast}\in U_{p}^{\circ}}\Biggl| \int^{+\infty}_{0}...\int^{+\infty}_{0}e^{- \lambda_{1}t_{1}-...- \lambda_{n}t_{n}}\bigl( -t_{1}\bigr)^{v_{1}} \cdot ... \cdot  \bigl( -t_{n}\bigr)^{v_{n}} \Bigl \langle x^{\ast},  f_{x^{\ast}}\bigl(t_{1},...,t_{n}\bigr) \Bigr \rangle \, dt_{1}\, ...\, dt_{n} \Biggr|
\\& \leq \int^{+\infty}_{0}...\int^{+\infty}_{0}e^{-\Re \lambda_{1}t_{1}-...-\Re \lambda_{n}t_{n}}t_{1}^{v_{1}} \cdot ... \cdot  t_{n}^{v_{n}}p\Bigl( f_{x^{\ast}}\bigl(t_{1},...,t_{n}\bigr) \Bigr ) \, dt_{1}\, ...\, dt_{n} 
\\& \leq M_{p} \int^{+\infty}_{0}...\int^{+\infty}_{0}e^{-\Re \lambda_{1}t_{1}-...-\Re \lambda_{n}t_{n}}t_{1}^{v_{1}} \cdot ... \cdot  t_{n}^{v_{n}}e^{\omega_{1}t_{1}+...+\omega_{n}t_{n}} \, dt_{1}\, ...\, dt_{n} 
\\& =M_{p}\frac{v_{1}! \cdot ... \cdot   v_{n}!}{\bigl(\Re \lambda_{1}-\omega_{1}\bigr)^{v_{1}+1}\cdot ... \cdot \bigl(\Re \lambda_{n}-\omega_{n}\bigr)^{v_{n}+1}}, 
\end{align*}}
for any $\Re \lambda_{1}>\omega_{1},..., $ $\Re \lambda_{n}>\omega_{n} $ and $(v_{1},..., v_{n}) \in {\mathbb N}_{0}^{n}.$ This yields (ii).

The equivalence between (iii) and (iii)' can be shown using the same arguments as in the corresponding part of proof of Theorem \ref{waxl}. Therefore, it remains to be proved that (ii) implies (iii).  Since for each seminorm $p\in \circledast$ we have $|\langle x^{\ast},x \rangle| \leq p(x),$ $x\in X,$ $x^{\ast}\in U_{p}^{\circ},$ an application of Theorem \ref{wa2} implies that (iii.2) holds. 
 
The part
(iii.1) can be deduced similarly as in the one-dimensional case and we will only provide the main details of proof here. Let us consider the space $\tilde{X}\subseteq X^{\ast \ast}$ consisting of all linear functionals $\tilde{x}$ on $X^{\ast}$ such that 
$P_{G}(\tilde{x}):=\sup_{x^{\ast}\in G}|\langle \tilde{x},x^{\ast} \rangle|<+\infty$ for each equicontinuous family $G\subseteq X^{\ast};$ the fundamental system of seminorms which defines a sequentially complete locally convex topology on $\tilde{X}$ is given by $(P_{G})_{G\in {\mathcal T}},$ where ${\mathcal T}$ denotes the collection of all equicontinuous families $G\subseteq X^{\ast}.$ We know that $X$ and $\tilde{X}$ are linearly and topologically isomorphic (\cite{x26}).

Suppose now that the numbers $r_{1} \in (0,1],...$, $r_{n} \in (0,1]$ are given. Using Theorem \ref{wa2}, it follows that for each functional $x^{\ast}\in X^{\ast}$ there exists a function $f_{x^{\ast}} \in L_{loc}^{1}([0,+\infty)^{n})$ such that{\small
\begin{align*}
\Bigl \langle x^{\ast} , F\bigl(  \lambda \bigr) \Bigr \rangle=\int^{+\infty}_{0}...\int^{+\infty}_{0}e^{-\lambda_{1}t_{1}-...-\lambda_{n}t_{n}}f_{x^{\ast}}\bigl(t_{1},...,t_{n}\bigr)\, dt_{1}\, ...\, dt_{n},\ \lambda_{i}>\omega_{i}\ \ (1\leq i\leq n).
\end{align*}}
Define{\small
$$
f_{r_{1},...,r_{n}; x^{\ast}}\bigl( t_{1},...,t_{n}\bigr):=\int^{t_{1}}_{0}\cdots \int^{t_{n}}_{0} g_{r_{1}}\bigl( t_{1}-s_{1}\bigr)\cdot ... \cdot g_{r_{n}}\bigl( t_{n}-s_{n}\bigr)f_{x^{\ast}}\bigl(s_{1},...,s_{n}\bigr)\, ds_{1}\, ...\, ds_{n},
$$}
for any $x^{\ast}\in X^{\ast}$ and $(t_{1},...,t_{n}) \in [0,+\infty)^{n}.$
Using the convolution theorem for multidimensional Laplace transform \cite[Proposition 2.17]{mvlt}, it follows that
{\small
\begin{align*}
\Bigl \langle x^{\ast} , F\bigl(  \lambda \bigr) \Bigr \rangle=\lambda_{1}^{r_{1}}\cdot ... \cdot \lambda_{n}^{r_{n}}\int^{+\infty}_{0}...\int^{+\infty}_{0}e^{-\lambda_{1}t_{1}-...-\lambda_{n}t_{n}}f_{r_{1},...,r_{n}; x^{\ast}}\bigl(t_{1},...,t_{n}\bigr)\, dt_{1}\, ...\, dt_{n},
\end{align*}}
for $\lambda_{1}>\max (\omega_{1},0),...,$ $  \lambda_{n}>\max (\omega_{n},0).$ For every $t=(t_{1},...,t_{n}) \in [0,+\infty)^{n},$ we define $F_{t} : X^{\ast} \rightarrow {\mathbb C}$ by $F_{t}(x^{\ast}):=\langle x^{\ast}, f_{r_{1},...,r_{n}; x^{\ast}}( t) \rangle ,$ $x^{\ast} \in X^{\ast}.$ Arguing as in the proof of \cite[Theorem 2.3]{x26}, we can show that $F_{t}\in \tilde{X}$ for all $t\in [0,+\infty)^{n}$ as well as that the mapping $t\mapsto F_{t},$ $t\in [0,+\infty)^{n}$ is continuous wth respect to the topology of $\tilde{X};$ let us only mention here that the linearity of mapping $F_{t}(\cdot)$ for fixed $t\in [0,+\infty)^{n}$ follows from the continuity of function $f_{r_{1},...,r_{n}; x^{\ast}}(\cdot)$ for each $x^{\ast} \in X^{\ast}$ and the uniqueness theorem for multidimensional Laplace transform \cite[Theorem 2.23]{mvlt}. The existence of a continuous function $f_{r_{1},...,r_{n}} : [0,+\infty) \rightarrow X$ such that $f_{r_{1},...,r_{n}}(0)=0$ and \eqref{prc1} holds
for $\lambda_{1}>\max(\omega_{1},0),..., $ $\lambda_{n}>\max(\omega_{n},0)$ now follows by identification of $X$ and $\tilde{X}$ through evaluation and the multidimensional Post-Widder formula \cite[Proposition 2.24]{mvlt}; cf. \cite[l. -1-l. -3, p. 457]{x26}.
Using the generalized norm formula, we can simply prove that
for each seminorm $p\in \circledast$ the estimate
\eqref{prc0} holds
for any $\bigl(t_{1},...,t_{n}\bigr) \in [0,+\infty)^{n}.$ 
\end{proof}

\begin{rem}\label{shaw-li}
Alternatively, the proof of implication (ii) $\Rightarrow$ (iii.1) can be given following the argumentation proposed by Y.-C. Li and S.-Y. Shaw 
in the proof of \cite[Theorem 2.1, pp. 79--80]{li-shaw}. In actual fact, we can use the complex characterization theorem for multidimensional vector-valued Laplace transform \cite[Theorem 2.13]{mvlt} in order to see that, for every real numbers $r_{1} \in (0,1],...$, $r_{n} \in (0,1],$ there exists a continuous function $w_{r_{1},...,r_{n}} : [0,+\infty)^{n} \rightarrow X$ such that $w_{r_{1},...,r_{n}}(0)=0$ and{\scriptsize
\begin{align*}  
F\bigl( \lambda_{1},...,\lambda_{n}\bigr)=\lambda_{1}^{r_{1}+1}\cdot ... \cdot \lambda_{n} ^{r_{n}+1}  
\int^{+\infty}_{0}...\int^{+\infty}_{0}e^{- \lambda_{1}t_{1}-...-\lambda_{n}t_{n}}w_{r_{1},...,r_{n}}\bigl(t_{1},...,t_{n}\bigr)\, dt_{1}\, ...\, dt_{n},
\end{align*}}
for $\Re \lambda_{1}>\max(\omega_{1},0),...,\ \Re \lambda_{n}>\max(\omega_{n},0).$ The function $f_{r_{1},...,r_{n}} (\cdot)$ constructed in (iii.1) and the function $w_{r_{1},...,r_{n}}(\cdot)$ satisfy the following equality
$$
w_{r_{1},...,r_{n}}\bigl(t_{1},...,t_{n}\bigr)=\int^{t_{1}}_{0}\cdots \int^{t_{n}}_{0}f_{r_{1},...,r_{n}}\bigl(s_{1},...,s_{n}\bigr)\, ds_{1}... \, ds_{n},\quad t_{1}\geq 0,\ ...,\ t_{n}\geq 0,
$$
so that the only problem in this approach is to prove that the partial derivative $(\partial^{n}/\partial t_{1} ... \partial t_{n})w_{r_{1},...,r_{n}}(\cdot)$ continuously exists on $[0,+\infty)^{n}.$ It seems very plausible that this problem can be solved with the help of estimate \eqref{mprp} and bipolar theorem.

In the proof of \cite[Theorem 2.1]{li-shaw}, the authors do not use the identification of pivot space $X$ and a closed subspace of its bidual $X^{\ast \ast}$, which has been originally used in the Banach space setting by M. Hieber in the proof of \cite[Theorem 3.2]{hiber}. The proof of \cite[Theorem 2.3]{x26}is also meaningful because 
T.-J. Xiao and J. Liang have explicitely constructed the space $\tilde{X}\subseteq X^{\ast \ast}$ which is linearly and topologically isomorphic to the initial space $X$ in the locally convex space setting.
\end{rem}

\end{document}